\newcommand{\ot}{\otimes}
\newcommand{\mC}{\mathcal{C}}
\numberwithin{equation}{section}
\newtheorem{theorem}[equation]{Theorem}
\newtheorem{defn}[equation]{Definition}
\newtheorem{conj}[equation]{Conjecture}
\theoremstyle{definition}
\newtheorem{remark}[equation]{Remark}
\newtheorem{ex}[equation]{Example}
\newtheorem{definition}[equation]{Definition}
\begin{document}

\title[gYBE-RFC]
{Solutions to generalized Yang-Baxter equations via ribbon fusion categories}

\author{Alexei Kitaev}
\email{kitaev@iqi.caltech.edu}
\address{California Institute of Technology\\Pasadena, CA 91125\\USA}

\author{Zhenghan Wang}
\email{zhenghwa@microsoft.com}
\address{Microsoft Station Q\\CNSI Bldg Rm 2237\\
    University of California\\
    Santa Barbara, CA 93106-6105\\
    U.S.A.}

\dedicatory{Dedicated to Mike Freedman on the occasion of his $60^{th}$ birthday}

\thanks{The second author is partially supported by NSF DMS 1108736 and would like to thank E. Rowell for observing $(3)$ of Thm. \ref{MainThm}, S. Hong for helping on $6j$ symbols, and R. Chen for numerically testing the solutions.}

\begin{abstract}

Inspired by quantum information theory, we look for representations of the braid groups $B_n$ on $V^{\otimes (n+m-2)}$ for some fixed vector space $V$ such that each braid generator $\sigma_i, i=1,...,n-1,$ acts on $m$ consecutive tensor factors from $i$ through $i+m-1$. The braid relation for $m=2$ is essentially the Yang-Baxter equation, and the cases for $m>2$ are called generalized Yang-Baxter equations.  We observe that certain objects in ribbon fusion categories naturally give rise to such representations for the case $m=3$.  Examples are given from the Ising theory (or the closely related $SU(2)_2$), $SO(N)_2$ for $N$ odd, and $SU(3)_3$.  The solution from the Jones-Kauffman theory at a $6^{th}$ root of unity, which is closely related to $SO(3)_2$ or $SU(2)_4$, is explicitly described in the end.

\end{abstract}

\maketitle

\section{Introduction}

A constant Yang-Baxter operator is an invertible linear operator $R: V\otimes V\longrightarrow V\otimes V$ which satisfies the braided version of the Yang-Baxter equation (YBE):
\vspace{.1in}
\begin{enumerate}\label{YBE:operatorform}
 \item[(YBE)] $(R\otimes I_V)(I_V\otimes R)(R\otimes I_V)=(I_V\otimes R)(R\otimes I_V)(I_V\otimes R)$.
\end{enumerate}
\vspace{.1in}
Any Yang-Baxter operator leads to a representation of the braid group $B_n$ by attaching $V$ to a braid strand, i.e., assigning $I^{\otimes (i-1)}\otimes R \otimes I^{\otimes (n-i-1)}$ to the braid generator $\sigma_i, i=1,\cdots, n-1$.  Inspired by quantum information theory, $R$ is recently interpreted as an entangling operator of $V\otimes V$.    Thinking along these lines, we could equally consider operators $R: V^{\otimes m}\longrightarrow V^{\otimes m}$ for any $m\geq 3$ that satisfy some generalized version of the YBE.  Such a generalization is proposed in \cite{RZWG}.  There are deep theories that lead to solutions of the YBE.  A natural question is how to find solutions to some generalized Yang-Baxter equations (gYBEs).  There are essentially four new solutions \cite{RZWG}\cite{RC} that are not derived from solutions to the YBE.  Though solutions to the YBE always lead to braid group representations for all $n$, this is not the case for general solutions to gYBEs because far commutativity is not automatically satisfied (See Section $4.1$ of \cite{RC}).  However, the four known solutions do satisfy far commutativity.  In this note, we point out a connection between the gYBE and the ribbon fusion category theory.  By finding a suitable object in a ribbon fusion category, we provide a solution to the gYBE for $m=3$ (or the $(d,3,1)$-gYBE in the terminology of \cite{RC}) which automatically leads to a braid group representation.

\section{Generalized Yang-Baxter-Equation object}

We follow the terminologies in Section $2$ of \cite{RC} for the gYBE and in Chapter $4$ of \cite{Wang} for ribbon fusion categories.  By far commutativity for the gYBE, we mean the condition in Definition $4.1$ of \cite{RC} as adapted in Definition \ref{gfarcommuntativity} below.

\begin{definition}
Let $V$ be a complex vector space of dimension $d$.  The $(d,m,1)$-gYBE is an equation for an invertible operator $R: V^{\ot m}\rightarrow V^{\ot m}$ such that
\vspace{.1in}
    \begin{enumerate}\label{gYBE:operatorform}
    \item[(gYBE)] $(R\otimes I_V) (I_V\ot R) (R\otimes I_V)=(I_V\ot R) (R\ot I_V) (I_V\ot R)$,
    \end{enumerate}
\vspace{.1in}
where $m$ is a natural number and $I_V$ is the identity operator on $V$.

Any matrix solution to the $(d,m,1)$-gYBE is called a $(d,m,1)$-$R$-matrix.

Note that the form of the $(d,m,1)$-gYBE is exactly the same as the YBE, thought the YBE is the $(d,2,1)$-gYBE in this language.
\end{definition}

If $R$ is a solution to a $(d,m,1)$-gYBE, for the braid generator $\sigma_i\in B_n$, set
$$R_{\sigma_i}=I^{\ot{(i-1)}} \otimes R \otimes I^{\ot{(n-i-1)}}.$$
Again this is exactly the same assignment as in the YBE case, but $B_n$ acts on the vector space $V^{\ot (n+m-2)}$.  The braid relation holds because $R$ satisfies the gYBE.
But far commutativity is not necessarily satisfied when $|i-j|>1$.

\begin{defn}\label{gfarcommuntativity}
Suppose $R$ is a $(d,m,1)$-$R$-matrix.  Then $R$ satisfies far commutativity if for each $j$ such that $2<j<m+1$,
$$R_{\sigma_1}R_{\sigma_j}=R_{\sigma_j}R_{\sigma_1}$$ in $\textrm{End}(V^{\ot (j-1+m)})$ for the braid generator $\sigma_j\in B_{j+1}$.
\end{defn}

If a $(d,m,1)$-$R$-matrix satisfies far commutativity, then it yields a representation of each $n$-strand braid group $B_n$ on $V^{\ot (n+m-2)}$.

There is a family of solutions to the $(2,3,1)$-gYBE that comes from the ribbon fusion category theory. The solutions and representations can be conveniently described by the tree bases of morphism spaces. There are constraints on the labels for the labeled trees to become a basis of general morphism spaces.  Therefore, the representation spaces are not necessarily tensor products. But in certain categories we can form invariant subspaces which have tensor product structures and are the desired braid representations. Some choices are formalized by the following definition.

\begin{definition}
An object $X$ in a ribbon fusion category $\mC$ is called a $(d,3,1)$-gYBE object with respect to a set of objects $S=\{X_i\}_{i\in I}$ of $\mC$ if the objects $\{X_i\}_{i\in I}$ are $d$ simple objects of distinct types of $\mC$ such that for each $i\in I$, $X\otimes X_i\cong \oplus_{j\in I}X_j$.
\end{definition}

\begin{ex}
Note that in the definition of a $(d,3,1)$-gYBE object, $X$ is not necessarily a simple object.  An example of a non-simple $(2,3,1)$-gYBE is $1\oplus \psi$ with respect to $\{1, \psi\}$ in the Ising theory.  Actually, the  $(2,3,1)$-gYBE object $1\oplus \psi$ leads to the following $(2,3,1)$-$R$-matrix:
$$
\begin{pmatrix}
1& 0 & 0 &0 &0 &0 &0 &0 \\
0& 0 &0 &1 &0 &0 &0 &0 \\
0 &0 &-1 &0 &0 &0 &0 &0 \\
0& 1& 0 & 0 &0 &0 &0 &0 \\
0&0&0&0& 0 &0 &1 &0 \\
0&0&0&0&0&-1&0& 0 \\
0&0&0&0& 1 &0 &0 &0 \\
0&0&0&0& 0 & 0&0 &1
\end{pmatrix}.$$
\end{ex}

Suppose $X$ is a gYBE object with respect to a set of objects $S=\{X_i\}_{i\in I}$.  Then there is a representation $V_{X_i, X^{\otimes n}, X_j}$ of the braid group $B_n$ obtained by considering the $(n+1)$-punctured disk $D^2_{X_i, X^{\otimes n}, X_j}$, where the $(n+1)$-punctures are labeled by one $X_i$ and $n$ $X$'s together with the boundary labeled by $X_j$.  In anyonic language, there are $(n+1)$ anyons---one $X_i$ and $n$ $X$'s---in the disk with total charge $X_j$.  A convenient orthonormal basis of $V_{X_i, X^{\otimes n}, X_j}$ is given, in graphical notation, by all admissible labelings of the following tree,  where $i_1,\cdots, i_{n-1}\in I$:

\[ \xy
(0,10)*{}="A"; (4,10)*{}="B";(8,10)*{}="C"; (14,10)*{}="K"; (14,-10.2)*{}="L";
(12,-10)*{}="D";(18,-16.2)*{}="H";(20,-19.1)*{}="M";(18,10)*{}="N";
(4,3.8)*{}="E";(8,-1.8)*{}="F";
"A";"M" **\dir{-}; "B";"E"**\dir{-};"C";"F" **\dir{-}; "K";"L" **\dir{-};"N";"H" **\dir{-};
(0,12)*{i};(4,12)*{X};(8,12)*{X};(20,-22)*{j}; (14,12)*{X}; (18,12)*{X};(11,6)*{...}; (5.5,-1.24)*{i_1};
(10,-6.6)*{\ddots}; (14,-15.3)*{i_{n-1}};
\endxy \]

Note that when $X$ is a $(d,3,1)$-gYBE object with respect to a set of objects $S=\{X_i\}_{i\in I}$, then every simple object $X_i, i\in I$ is an admissible label for any internal slant edge of the tree above.  Since all morphism spaces $\textrm{Hom}(X_i, X\otimes X_j)$ are $1$-dimensional,  there are no vertex labelings.

The representation of a braid $\sigma$ on $V_{X_i, X^{\otimes n}, X_j}$ is simply given by the stacking of the braid onto the $n$-strands labeled by $X$.

\begin{theorem}\label{MainThm}

\begin{enumerate}

\item Any $(d,3,1)$-gYBE object $X$ leads to a solution of the $(d,3,1)$-gYBE which satisfies far commutativity.

\item Let $L=\{Y_k\}$ be a representative set of all isomorphism classes of simple objects of $\mC$.  Then the number of distinct eigenvalues of each braid generator is less or equal to $l$, where $l=\sum_{Y_k\in L}\textrm{dim(Hom}(Y_k, X\otimes X))$.  Note that the resulting braid group representation is always a direct sum over $i,j\in I$ and therefore reducible.

\item The quantum dimension of a gYBE object is always an integer.  Actually, $\textrm{dim}(X)=d$, where $d$ is also the cardinality of the index set $I$.

\end{enumerate}

\end{theorem}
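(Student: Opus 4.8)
The plan is to realize the whole construction inside the categorical braiding, so that the gYBE, far commutativity, and the eigenvalue bound all reduce to statements about the braid-group action on the tree spaces $V_{X_i,X^{\otimes n},X_j}$. The first step is to make the tensor structure explicit. Because $X\otimes X_i\cong\bigoplus_{j\in I}X_j$ for every $i\in I$, every label in $I$ is admissible on every internal edge and there are no fusion constraints; writing $i=:i_0$ and $j=:i_n$, the admissible labelings of the tree are exactly the sequences $(i_0,i_1,\dots,i_{n-1},i_n)\in I^{n+1}$. With $V=\mathbb{C}^{I}$ and orthonormal basis $\{e_i\}_{i\in I}$ (so $\dim V=d$), I would identify
\[ \bigoplus_{i,j\in I}V_{X_i,X^{\otimes n},X_j}\;\cong\;V^{\otimes(n+1)},\qquad (i_0,\dots,i_n)\longmapsto e_{i_0}\otimes\cdots\otimes e_{i_n}, \]
which is the required $V^{\otimes(n+m-2)}$ for $m=3$.

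For (1), the generator $\sigma_k$ braids the $k$-th and $(k+1)$-th $X$-strands, an operation that changes only the edge label $i_k$ while fixing its neighbours $i_{k-1},i_{k+1}$. Evaluating this action by the usual recipe---an $F$-move regrouping $X_{i_{k-1}},X,X$, the braiding $R^{XX}$ on the resulting $X\otimes X$ channel, and the inverse $F$-move---shows that $\sigma_k$ is the identity outside the three tensor factors $k-1,k,k+1$, and that the resulting three-factor operator depends only on the objects $X,X$ together with the surrounding labels, not on the position $k$. I would \emph{define} $R$ to be this common operator, so that $R_{\sigma_k}=I^{\otimes(k-1)}\otimes R\otimes I^{\otimes(\cdots)}$ by construction and the braid relation $\sigma_k\sigma_{k+1}\sigma_k=\sigma_{k+1}\sigma_k\sigma_{k+1}$ in $\mC$ becomes exactly the gYBE for $R$. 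For far commutativity the decisive point is that $\sigma_k\mapsto R_{\sigma_k}$ is an honest braid-group representation inherited from the categorical braiding: in $B_4$ the generators $\sigma_1$ and $\sigma_3$ braid disjoint pairs of strands, hence commute, so $R_{\sigma_1}R_{\sigma_3}=R_{\sigma_3}R_{\sigma_1}$ even though $R_{\sigma_1}=R\otimes I^{\otimes 2}$ and $R_{\sigma_3}=I^{\otimes 2}\otimes R$ overlap on the middle tensor factor. This overlap is precisely why far commutativity fails to be automatic for a general $(d,3,1)$-$R$-matrix, and why the categorical origin is what saves it. I expect the main difficulty to lie in this paragraph: carefully verifying from the $F$/$R$ calculus that the braiding really produces a single, position-independent operator of the stated local form, since that is the content which upgrades a categorical braiding into a genuine $R$-matrix.

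For (2), after the $F$-move change of basis $\sigma_k$ is block-diagonal and acts on each block through an eigenvalue of the braiding $c_{X,X}\in\textrm{End}(X\otimes X)$. As $\mC$ is semisimple, $\textrm{End}(X\otimes X)\cong\bigoplus_k\textrm{Mat}_{N_k}(\mathbb{C})$ with $N_k=\dim\textrm{Hom}(Y_k,X\otimes X)$, so $c_{X,X}$ has at most $\sum_k N_k=l$ distinct eigenvalues; since an $F$-move is a similarity transformation, $R$, and hence every $R_{\sigma_k}$, has at most $l$ distinct eigenvalues. Reducibility is immediate from the first paragraph, since braiding the $X$-strands preserves both boundary labels and the representation splits as $\bigoplus_{i,j}V_{X_i,X^{\otimes n},X_j}$. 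Finally, for (3) I would apply the quantum dimension---additive on $\oplus$ and multiplicative on $\otimes$---to $X\otimes X_i\cong\bigoplus_{j\in I}X_j$, giving $\dim(X)\dim(X_i)=\sum_{j\in I}\dim(X_j)=:D$ for every $i\in I$. Since simple objects have nonzero dimension, $\dim(X_i)=D/\dim(X)$ is independent of $i$; calling this common value $\delta$ yields $D=d\,\delta$ and therefore $\dim(X)=D/\delta=d=|I|$, an integer.
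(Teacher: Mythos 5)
Your proposal is correct and follows essentially the same route as the paper: define $R$ on $\bigoplus_{i,j}V_{X_i,X^{\otimes 2},X_j}\cong V^{\otimes 3}$ as the braiding conjugated by $F$-matrices, deduce far commutativity from the fact that the categorical construction already gives a $B_n$-representation, bound the eigenvalues by those of $R^{XX}_{Y_k}$ over the fusion channels, and derive $\dim(X)=d$ from additivity and multiplicativity of quantum dimensions applied to $X\otimes X_i\cong\bigoplus_j X_j$. Your write-up merely makes explicit a few points the paper leaves implicit (the identification of the labeled-tree basis with $I^{n+1}$ and the semisimple decomposition of $\mathrm{End}(X\otimes X)$), but the substance is identical.
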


\begin{proof}

(1):  Using the graphical notation above, the $(d,3,1)$-gYBE solution is given by the braiding matrix $R=\oplus_{i,j\in I}M^{X_iXX}_{X_j}: \oplus_{i,j\in I} V_{X_i,X^{\otimes 2},X_j}\longrightarrow \oplus_{i,j\in I} V_{X_i,X^{\otimes 2},X_j}$, where $M^{X_iXX}_{X_j}$ consists of the braidings $R^{XX}_{Y_k}$ below in $(2)$ conjugated by the $F$-matrix $F^{X_iXX}_{X_j}$.  Because $R$ leads to a representation of $B_n$ for each $n$, it follows that $R$ satisfies far commutativity.

(2):  Using graphical calculus, we find that the eigenvalues of a $(d,3,1)$-$R$-matrix given in $(1)$ consist of the eigenvalues of the braidings $\{R^{XX}_{Y_k}, Y_k\in L\}$ for all $Y_k$ such that the fusion coefficients $N_{XX}^{Y_k}\neq 0$ and $N_{X_iX_j}^{Y_k}\neq 0$.  This follows from the fact that the representation matrix for a braid generator on $V_{X_i,X^{\otimes 2},X_j}$ is the conjugation of the braiding matrix by the  $F$-matrix $F^{X_iXX}_{X_j}$.

(3):  From $X\otimes X_i\cong \oplus_{j\in I}X_j$, we have $d_Xd_{X_i}=\sum_{j\in I} d_{X_j}$.  Thus, the quantum dimension $d_{X_i}$ is independent of $i$.  It follows that $d_Xd_{X_i}=d d_{X_i}$ and therefore that $d_X=d$.

\end{proof}

To provide examples of $(d,3,1)$-gYBE objects, we consider the unitary modular category $SO(N)_2$ described in Section $3.1$ of \cite{NR}.  When $N=2r+1, r\geq 1$, $SO(N)_2$ is of rank $r+4$ with simple objects denoted as $\{X_0, X_{2\lambda_1}, X_{\gamma^i}, 1\leq i \leq r, X_{\epsilon}, X_{\epsilon'}\}$ in \cite{NR}.  We will denote $X_0$ as $1$, $X_{2\lambda_1}$ as $Z$, and $X_{\gamma^i}$ as $X_i$ below.  All fusion rules follow from the following:

\begin{enumerate}

\item $X_{\epsilon}\otimes X_{\epsilon}\cong 1 \oplus \oplus_{i=1}^{i=r}X_{i}$

\item $X_{\epsilon}\otimes X_{i}\cong X_{\epsilon}\oplus X_{\epsilon'}, 1\leq i \leq r$

\item $X_{\epsilon}\otimes X_{\epsilon'}\cong  Z\oplus \oplus_{i=1}^{i=r}X_{i}$

\item $Z \otimes X_{\epsilon}=X_{\epsilon'}$

\item $Z \otimes Z=1$

\item $Z \otimes X_{i}=X_i, 1\leq i \leq r$

\item $X_i\otimes X_i=1\oplus Z\oplus X_{\textrm{min}\{2i, 2r+1-2i\}}, 1\leq i \leq r$

\item $X_i\otimes X_j=X_{j-i}\oplus X_{\textrm{min}\{i+j, 2r+1-i-j\}}, i<j$

\end{enumerate}

By examining the fusion rules, we observe

\begin{theorem}
The simple objects $X_{i}, 1\leq i \leq r,$ are $(2,3,1)$-gYBE objects with respect to the set $S=\{X_{\epsilon}, X_{\epsilon'}\}$.
\end{theorem}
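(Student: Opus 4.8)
The plan is to verify the two defining conditions of a $(2,3,1)$-gYBE object directly from the listed fusion rules. First I would observe that $X_{\epsilon}$ and $X_{\epsilon'}$ are two distinct simple objects, so the set $S=\{X_{\epsilon}, X_{\epsilon'}\}$ consists of $d=2$ simple objects of distinct types; hence the index set $I$ has cardinality $2$ and $\oplus_{j\in I}X_j \cong X_{\epsilon}\oplus X_{\epsilon'}$. It then remains to check, for each fixed $i$ with $1\leq i\leq r$, that both $X_i\otimes X_{\epsilon}$ and $X_i\otimes X_{\epsilon'}$ are isomorphic to $X_{\epsilon}\oplus X_{\epsilon'}$.

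The first decomposition is immediate. Fusion rule $(2)$ states $X_{\epsilon}\otimes X_i\cong X_{\epsilon}\oplus X_{\epsilon'}$, and since $\mC$ is ribbon (hence braided) the tensor product is commutative on isomorphism classes, so $X_i\otimes X_{\epsilon}\cong X_{\epsilon}\oplus X_{\epsilon'}$.

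The second decomposition, which is the one not listed explicitly among the rules, is the key step. I would derive it by rewriting $X_{\epsilon'}$ using rule $(4)$, namely $X_{\epsilon'}\cong Z\otimes X_{\epsilon}$. Then, combining associativity and commutativity with rule $(6)$ (which gives $X_i\otimes Z\cong Z\otimes X_i\cong X_i$), I compute
$$X_i\otimes X_{\epsilon'}\cong X_i\otimes(Z\otimes X_{\epsilon})\cong(X_i\otimes Z)\otimes X_{\epsilon}\cong X_i\otimes X_{\epsilon}\cong X_{\epsilon}\oplus X_{\epsilon'},$$
where the last isomorphism is the one just established.

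Hence for every $i$ both tensor products decompose as $\oplus_{j\in I}X_j$, so each $X_i$ is a $(2,3,1)$-gYBE object with respect to $S$. There is no serious obstacle here beyond careful bookkeeping of the fusion rules; the only step requiring genuine thought is the treatment of $X_{\epsilon'}$, and I expect the cleanest route to be the reduction to the $X_{\epsilon}$ case via the invertible object $Z$, rather than attempting a fresh direct computation of $X_i\otimes X_{\epsilon'}$.
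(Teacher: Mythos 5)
Your proof is correct and follows the same route the paper intends: the paper offers no written argument beyond ``by examining the fusion rules, we observe,'' and your verification is exactly that examination carried out in full. In particular, your derivation of the unlisted decomposition $X_i\otimes X_{\epsilon'}\cong X_\epsilon\oplus X_{\epsilon'}$ by writing $X_{\epsilon'}\cong Z\otimes X_\epsilon$ and using $Z\otimes X_i\cong X_i$ is the natural way to close the one gap the paper leaves implicit.
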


\begin{remark}
In order to write down a $(d,3,1)$-$R$-matrix as an explicit matrix, we need conventions for ordering bases and tensoring matrices.  For tensoring matrices, we will use the Kronecker product, which means bases will be ordered lexicographically.
\end{remark}

 For an explicit example, we compute the gYBE solution given by the Jones-Kauffman theory at a $6^{th}$ root of unity, closely related to $SO(3)_2$.  Let the Kauffman variable $A=ie^{-\frac{\pi i}{12}}$, then the label set consists of $L=\{0,1,2,3,4\}$ \cite{Wang}.  In the notation above, $X_0=0,X_\epsilon=1, X_{\epsilon'}=3,X_1=2,Z=4$.  Thus, the object $2$ is a $(2,3,1)$-gYBE object with respect to simple objects $\{1,3\}$.  The $(2,3,1)$-gYBE-$R$-matrix is the braiding matrix for the braid generator $\sigma_2$ with respect to the following tree basis, where $i\in \{1,3\}$:

\[ \xy
(0,10)*{}="A"; (4,10)*{}="B";(8,10)*{}="C"; (14,10)*{}="K"; (14,-9.2)*{}="L"; (12,-10)*{}="D";(20,-17.3)*{}="H";
(4,3.8)*{}="E";(8,-1.2)*{}="F"; "A";"H" **\dir{-};
"C";"F" **\dir{-}; "K";"L" **\dir{-};(-1,12)*{1\;\textrm{or}\;3};
(8,12)*{2};(20,-20)*{1\;\textrm{or}\;3}; (14,12)*{2}; (10,-6.6)*{i};
\endxy \]

The internal edge can be labeled by either $1$ or $3$, just like the other two slant edges.  The qubit space ${\mathbb{C}}^2$ is spanned by the two labels $\{1,3\}$ with a basis denoted as $\{e_i,i=1,3\}$.  Therefore, the representation space for $\sigma_2$ has a basis $\{e_{ijk}=e_i\otimes e_j\otimes e_k \}$ for $i,j,k\in \{1,3\}$,  which can be identified as $({\mathbb{C}}^2)^{\otimes 3}$.  For matrix tensor products, we use the Kronecker product.  Then the basis is ordered lexicographically as
$$\{e_{111}, e_{113}, e_{131}, e_{133}, e_{311}, e_{313}, e_{331}, e_{333}\},$$
It follows that the $(2,3,1)$-$R$-matrix is the $8\times 8$ matrix given in the form $R=P^{-1}BP$, where $B=M^{122}_1 \oplus M^{322}_1 \oplus M^{122}_3 \oplus M^{322}_3$ is a braid matrix and $P$ is a permutation matrix. The natural order to compute the braid matrix $B$ is
$$\{e_{111}, e_{131}, e_{113}, e_{133}, e_{311}, e_{331}, e_{313}, e_{333}\}.$$
Furthermore, we need to order the bases for the braidings $\{R^{22}_{i}, i=0,2,4\}$.  Our orderings are $\{i=0, i=2\}$ and $\{i=4, i=2\}$.
Under this ordering of the bases, each $2\times 2$ matrix $M^{abc}_d$ is the conjugation of an $R$-matrix by the $F$-matrix $F^{abc}_d$, where the $R$ matrix is diagonal.  Explicitly as an $8\times 8$ matrix, the braid matrix $B$ is

$$
\begin{pmatrix}
H \begin{pmatrix}
e^{\frac{2\pi i}{3}} & 0\\
0 & e^{\frac{4\pi i}{3}}
\end{pmatrix} H
&0 &0 &0  \\
0&H \begin{pmatrix}
e^{\frac{5\pi i}{3}} & 0\\
0 & e^{\frac{4\pi i}{3}}
\end{pmatrix} H
&0 &0 \\
0&0&
H \begin{pmatrix}
e^{\frac{5\pi i}{3}} & 0\\
0 & e^{\frac{4\pi i}{3}}
\end{pmatrix} H
&0 \\
0& 0& 0&
H \begin{pmatrix}
e^{\frac{2\pi i}{3}} & 0\\
0 & e^{\frac{4\pi i}{3}}
\end{pmatrix} H
\end{pmatrix},$$ where $H$ is the Hadamard matrix
$H=\frac{1}{\sqrt{2}}\begin{pmatrix}
1 & 1\\
1& -1
\end{pmatrix}$.
The permutation matrix results from the order changing of the two bases, so it is the block sum $P=1\oplus \sigma_x \oplus 1 \oplus 1\oplus \sigma_x \oplus 1$, where $\sigma_x$ is the Pauli matrix
$\sigma_x=\begin{pmatrix}
0 & 1\\
1 & 0
\end{pmatrix}$.   Note that $P=P^{-1}$.
Computing everything, we find the $(2,3,1)$-$R$-matrix to be
$$
\begin{pmatrix}
-\frac{1}{2}& 0 & \frac{\sqrt{3}}{2} i &0 &0 &0 &0 &0 \\
0& -\frac{\sqrt{3}}{2} i &0 &\frac{1}{2} &0 &0 &0 &0 \\
\frac{\sqrt{3}}{2} i &0 &-\frac{1}{2} &0 &0 &0 &0 &0 \\
0& \frac{1}{2}& 0 & -\frac{\sqrt{3}}{2} i &0 &0 &0 &0 \\
0&0&0&0& -\frac{\sqrt{3}}{2} i &0 &\frac{1}{2} &0 \\
0&0&0&0&0&-\frac{1}{2}&0& \frac{\sqrt{3}}{2} i \\
0&0&0&0& \frac{1}{2} &0 &-\frac{\sqrt{3}}{2} i &0 \\
0&0&0&0& 0 & \frac{\sqrt{3}}{2} i &0 &-\frac{1}{2}
\end{pmatrix}.$$

Since the distinct eigenvalues of the $(2,3,1)$-$R$-matrix $R$ are $\{e^{\frac{2\pi i}{3}},e^{\frac{4\pi i}{3}}, e^{\frac{5\pi i}{3}}\}$, this solution is different from the four known solutions in \cite{RZWG}\cite{RC}.

The referee pointed out a $(3,3,1)$-gYBE simple object in $SU(3)_3$.  The unitary modular category $SU(3)_3$ is of rank $10$.  Using the notations in \cite{Rowell}, the simple object $Y$ of dimension $3$ is a $(3,3,1)$-gYBE object with respect to $\{X_1, {X_2}^{*}, X_3\}$ (see the Bratelli diagram in Fig. $1$ of \cite{Rowell} for the fusion rules).

Finally, we make the following conjecture.

\begin{conj}
Images of resulting braid group representations from $(d,3,1)$-gYBE objects in unitary ribbon fusion categories are always finite groups.
\end{conj}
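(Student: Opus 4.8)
The plan is to recognize this conjecture as an instance of the Property F conjecture and to attack it using the integrality supplied by part $(3)$ of Theorem \ref{MainThm}. First I would reinterpret the representations in question: by construction the space $V_{X_i,X^{\otimes n},X_j}$ is the morphism space $\mathrm{Hom}(X_j, X_i\otimes X^{\otimes n})$, and the braid action is the canonical action of $B_n$ on the $n$ tensor copies of $X$. Ranging over $i,j\in I$, these are exactly the simple summands of the standard braid-group representation carried by $\mathrm{End}(X^{\otimes n})$. The conjecture is therefore equivalent to the statement that every $(d,3,1)$-gYBE object $X$ has property $F$, namely that all braid representations associated to $X$ have finite image.

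Second, I would bring in the structure theorem. By Theorem \ref{MainThm}(3) the object $X$ has integer quantum dimension $d_X=d$, and this integrality is exactly the hypothesis under which finite braid image is expected. In all of the examples above the ambient categories---Ising, $SO(N)_2$, $SU(3)_3$, and the Jones-Kauffman theory at a $6$th root of unity---are weakly integral, so the conjecture is consistent with, and indeed predicted by, the Property F philosophy. For these examples finiteness can moreover be established by known methods: the Ising and Jones-Kauffman representations factor through Temperley-Lieb/Jones quotients of known finite image, while for the metaplectic categories $SO(N)_2$ one identifies the braid image with a Gaussian (Weil) representation of a finite extraspecial $2$-group.

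For a uniform argument I would proceed in three steps. First, show that every braid generator has finite order: the eigenvalues of the braiding $R^{XX}_{Y_k}$ are, up to sign, square roots of the ratio $\theta_{Y_k}/\theta_X^2$ of ribbon twists, and since the twists of a ribbon fusion category are roots of unity (the Anderson-Moore-Vafa theorem), each generator is of finite order. Second, use part $(2)$ of Theorem \ref{MainThm} to bound the number of distinct eigenvalues by $l$ and thereby factor the representation through a finite-dimensional cyclotomic-Hecke-type quotient of $\mathbb{C}[B_n]$. Third, bound the image inside a finite unitary group by combining unitarity with the rigidity of the all-ones fusion $X\otimes X_i\cong\oplus_{j\in I}X_j$, which constrains the matrix entries to a fixed cyclotomic field.

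The hard part will be the third step, and it is the same obstacle that keeps Property F open in general: finite order of the generators does not by itself force the image to be finite, since a finitely generated torsion matrix group can be infinite. Concretely, once the braiding has three or more distinct eigenvalues---as already happens for the Jones-Kauffman solution computed above, whose eigenvalues are $e^{2\pi i/3},e^{4\pi i/3},e^{5\pi i/3}$---the relevant cyclotomic-Hecke representations have no general finiteness theorem, and one is forced to argue case by case. A realistic program is therefore to prove the conjecture along a classification of $(d,3,1)$-gYBE objects, dispatching the weakly-integral metaplectic family through its Gauss-sum/Weil-representation structure, rather than to expect a single uniform argument.
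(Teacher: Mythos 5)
The statement you are asked about is a \emph{conjecture}: the paper offers no proof of it, so there is nothing to match your attempt against, and your proposal must stand or fall on its own. As a proof it does not close. Your framing is correct and useful---the conjecture is precisely the Property F conjecture specialized to $(d,3,1)$-gYBE objects, Theorem \ref{MainThm}(3) supplies the weak-integrality hypothesis that motivates it, and the listed examples (Ising, Jones--Kauffman at a $6$th root of unity, the metaplectic $SO(N)_2$ family via extraspecial $2$-groups/Weil representations, $SU(3)_3$) can indeed be dispatched by known case-by-case methods. But your ``uniform argument'' stops exactly where the open problem begins: step three is asserted as a goal, not carried out, and you concede as much. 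So the proposal is a research program consistent with the paper's intent, not a proof of the statement.

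One substantive correction to your diagnosis of the obstruction. You write that the argument fails because ``a finitely generated torsion matrix group can be infinite.'' That is false in characteristic zero: by Schur's theorem, a finitely generated periodic subgroup of $GL_n(\mathbb{C})$ is finite. The actual gap is one step earlier: knowing that the \emph{generators} $R_{\sigma_i}$ have finite order (which does follow from Vafa's theorem on twists, at least for simple $X$; for non-simple gYBE objects such as $1\oplus\psi$ in Ising the eigenvalue formula $\pm\sqrt{\theta_{Y_k}}/\theta_X$ needs modification since $\theta_X$ is not a scalar) does not show that every element of the image has finite order---a group generated by finite-order unitaries can contain elements of infinite order and be infinite. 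If you could show the image is torsion, Schur's theorem would finish the proof immediately; that, and not the structure of torsion linear groups, is the missing idea. Likewise, step two overstates what Theorem \ref{MainThm}(2) buys you: bounding the number of eigenvalues by $l$ only factors the representation through a quotient of $\mathbb{C}[B_n]$ by the minimal polynomial of the generators, and for $l\geq 3$ (as in the Jones--Kauffman example with eigenvalues $e^{2\pi i/3}, e^{4\pi i/3}, e^{5\pi i/3}$) such cyclotomic quotients are not known to be finite-dimensional or to have finite braid image in general.
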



\begin{thebibliography}{AA}

\bibitem[C]{RC}R.\ S. Chen, \emph{Generalized Yang-Baxter equations and Braiding Quantum Gates}, Journal of Knot Theory and its Ramifications (to appear), Arxiv: arXiv:1108.5215.

\bibitem[NR]{NR}D.\ Naidu, and E.\ C.\ Rowell, \emph{A finiteness property for braided fusion categories}, Algebr. and Represent. Theor. 14 (2011), no. 5, 837-855, arXiv:0903.4157

\bibitem[R]{Rowell}E.\ C. Rowell, \emph{A quaternionic braid representation (after Goldschmidt and Jones)}, Quantum Topol.  2 (2011), 173-182, arXiv:1006.4808.

\bibitem[RZWG]{RZWG}E.\ C.\ Rowell, Y.\ Zhang, Y.~S.\ Wu, and M.~L.\ Ge, \emph{Extraspecial Two-Groups, Generalized Yang-Baxter Equations and Braiding Quantum Gates,} Quantum Inf.\ Comput.\ 10 (2010) no. 7-8, 685–--702, arXiv:0706.1761.

\bibitem[W]{Wang}Z.\ Wang, \emph{Topological quantum computation}, CBMS monograph vol. 112, American Mathematical Society, 2010.


\end{thebibliography}
\end{document}